\keywords{Grigorchuk group, branch groups, subset sum problem, $\NP$-completeness}
\newcommand{\gp}[1]{\langle #1 \rangle}
\newcommand{\gpr}[2]{{\left\langle #1 \mid #2 \right\rangle}}
\def\ovx{{\overline{x}}}
\DeclareMathOperator{\Mat}{{Mat}}
\def\P{{\mathbf{P}}}
\def\NP{{\mathbf{NP}}}
\def\SSP{{\mathbf{SSP}}}
\def\ZOE{{\mathbf{ZOE}}}
\begin{document}

\title{On the subset sum problem in branch groups}

\author{Andrey Nikolaev}
\address{Stevens Institute of Technology, Hoboken, NJ, 07030, USA}
\email{anikolae@stevens.edu}

\author{Alexander Ushakov}
\address{Stevens Institute of Technology, Hoboken, NJ, 07030, USA}
\email{aushakov@stevens.edu}

\begin{abstract}
We consider a group-theoretic analogue of the classical subset sum problem. In this brief note, we show that the subset sum problem is $\NP$-complete in the first Grigorchuk group. More generally, we show $\NP$-hardness of that problem in weakly regular branch groups, which implies $\NP$-completeness if the group is, in addition, contracting.	
\end{abstract}

\maketitle

\section{Introduction}\label{sec:intro}

The study of discrete optimization problems in groups was initiated in~\cite{Miasnikov-Nikolaev-Ushakov:2014a}, 
where the authors introduced group-theoretic generalizations of the classical knapsack problem 
and its variations, e.g., the subset sum problem and bounded submonoid membership problem. 
In the subsequent papers~\cite{Miasnikov-Nikolaev-Ushakov:2014b} and~\cite{Myasnikov-Nikolaev-Ushakov:2016}, the authors studied generalizations of the Post corresponce problem and classical lattice problems in groups.
The investigation of knapsack-type problems in groups continued in 
papers~\cite{Frenkel-Nikolaev-Ushakov:2014,Konig-Lohrey-Zetzsche:2016,Lohrey-Zetzsche:2016,Ganardi-etal:2018,Misch-Tr:2017, Misch-Tr:2018}.
The computational properties of these problems, 
aside from being interesting in their own right, 
were shown to be closely related to a wide range of well-known geometric and algorithmic properties of groups. 
For instance, the complexity of knapsack-type problems 
in certain groups depends on geometric features 
of a group such as growth, subgroup distortion, and negative curvature. 
The Post correspondence problem in $G$ is closely related to the twisted conjugacy problem in $G$, the equalizer problem in $G$, and a strong version of the word problem. Furthermore, lattice problems are related to the classical subgroup membership problem
and finite state automata. We refer the reader to the aforementioned papers for details.

In this paper, we prove $\NP$-hardness of the subset sum problem in any finitely generated weakly regular branch group. For groups with polynomial time word problem, e.g., the first Grigorchuk group, this implies $\NP$-completeness.

\subsection{Subset sum problem}

Let $G$ be a group generated by a finite set 
$X=\{x_1,\ldots,x_n\}\subseteq G$. Elements in $G$ can be expressed
as products of the generators in $X$ and their inverses.
Hence, we can state the following combinatorial problem.

\medskip

\noindent{\bf The subset sum  problem $\SSP(G,X)$\index{$\SSP(G,X)$}:}
Given words $g_1,\ldots,g_k,g$ over the alphabet ${X\cup X^{-1}}$, decide if
  \begin{equation} \label{eq:SSP-def}
  g = g_1^{\varepsilon_1} \ldots g_k^{\varepsilon_k}
  \end{equation}
in the group $G$ for some $\varepsilon_1,\ldots,\varepsilon_k \in \{0,1\}$.

\medskip
By \cite[Proposition 2.5]{Miasnikov-Nikolaev-Ushakov:2014a}
computational properties of $\SSP$ do not depend on the choice of a finite generating set $X$
and, hence, the problem can be abbreviated as $\SSP(G)$.
Also, the same paper provides a variety of examples
of groups with  $\NP$-complete (or polynomial time) subset sum problems.
For instance, $\SSP$ is $\NP$-complete for the following groups:
\begin{enumerate}[(a)]
\item 
the abelian group $\mathbb Z^\omega$;
\item 
free metabelian non-abelian groups;
\item 
wreath products of finitely generated infinite abelian groups;
\item 
metabelian Baumslag--Solitar groups $BS(m, n)$ with $0\ne m\ne n\ne 0$;
\item 
the metabelian group
$GB =\gpr{a,s,t}{[a, a^t ] = 1, [s, t] = 1, as = aa^t}$;
\item 
Thompson's group $F$.
\end{enumerate}
One can observe that in a number of the above examples, $\NP$-completeness of $\SSP$ is 
a consequence of exponential subgroup distortion. Further, it is established in~\cite{Nikolaev-Ushakov:2017} that the latter is a sole source of $\NP$-hardness in the case of polycyclic groups. In the present note we show that the $\NP$-hardness of the subset sum problem for weakly regular branch groups is due to existence of abelian subgroups of arbitrarily large rank.

\subsection{Zero-one equation problem}\label{sub:zoe}

Recall that a vector $\overline{v} \in \mathbb Z^n$ is called a {\em zero-one} vector if each entry in $\overline{v}$ is either $0$ or $1$.
Similarly, a square matrix  $A\in \Mat(n,\mathbb Z)$ is called a {\em zero-one} matrix if each entry in $A$ is either $0$ or $1$.
Let $1^n$ denote the vector $(1,\ldots,1) \in \mathbb Z^n$.
The following problem is $\NP$-complete  
(see \cite[Section 8.3]{Dasgupta-Papadimitriou-Vazirani:2006}).

\medskip\noindent
{\bf Zero-one equation problem (ZOE):}
Given  $n$ zero-one vectors $\overline{a_1},\ldots, \overline{a_n}\in\mathbb Z^n$, decide if there exists a zero-one vector $\ovx=(x_1,\ldots,x_n) \in \mathbb Z^n$ satisfying $x_1\overline{a_1}+\cdots+ x_n\overline{a_n}=(1,1,\ldots,1)$, or not.

\subsection{Preliminary result in branch groups} The class of branch groups was originally explicitly defined by Grigorchuk in 1997. Groups in this class possess remarkable algebraic, geometric, and analytic properties, and are studied in relation to just-infiniteness, Burnside problems, random walks, amenability, and many other topics. Geometrically, branch groups are defined in terms of action on rooted trees. We refer the reader to~\cite{Bartholdi-etal:2003} for historic details and a thorough introduction of this class. For purposes of the present paper, we follow terminology exhibited in~\cite{Bartholdi-etal:2003}.

Let a finitely generated 
branch group $G$ act on a regular tree $\mathcal T^{(m)}$, $m\ge 2$. Let $\mathcal L_n$, $n=0,1,2,\ldots$, denote the $n$-th level of $\mathcal T^{(m)}$. Let $\psi$ be the usual embedding of the level $1$ stabilizer into $G^m$, $\psi: \mathop{\mathrm{St}}(\mathcal L_1)\to G^m$.
Recall that a 
branch group $G$ acting on the regular tree $\mathcal T^{(m)}$ is a regular (resp. weakly regular) branch group if $\psi$ is subdirect and there exists a finite index subgroup $K$ of $G$ such that $K^m$ is contained in $\psi(K)$ as a subgroup of finite (resp. perhaps infinite) index. We denote the arising embedding of $K^m$ into $K$ by $\chi$.

Let $\sigma_j$, $j=0,1,\ldots,m-1$, be the embedding $\sigma_j:K\to K^m$, $x\mapsto (1,\ldots,1,x,1,\ldots,1)$, where in the right hand side $x$ is in $(j+1)$-th coordinate. This gives us $m$ embeddings $\varphi_j=\chi\circ \sigma_j:K\to K$, $j=0,\ldots,m-1$.



One can notice that a (weakly) regular branch group contains $\mathbb Z^\infty$ or $\mathbb Z_k^\infty$ as a subgroup. In the next lemma we observe that there is such a subgroup whose first $n$ generators can be produced in polynomial time. We note that a similar construction is employed in~\cite[Section 10]{Bartholdi-etal:2019} (see Lemma~54 and on).

\begin{lem}\label{le:abelian_subgroup}
Let a finitely generated group $G$ be a weakly regular branch group over $K$. There is
\begin{itemize}
\item $k$ which is an integer $k>2$ or infinity,
\item a sequence $a_1,a_2,\ldots\in K$ of group elements of order $k$ such that the sum $\langle a_1\rangle+\langle a_2\rangle+\cdots\le G$ is direct, and
\item a polynomial time algorithm that, given a (unary) positive integer $n$, produces $n$ elements $a_1,\ldots,a_n\in K$.
\end{itemize}
\end{lem}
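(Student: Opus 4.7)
The plan is to realize each $a_i$ as $\varphi_{v_i}(a)$, where $a\in K$ is a single element of suitable order, $\varphi_v$ denotes an iterated composition of the branch embeddings $\varphi_j$ indexed by a tree vertex $v$, and $\{v_i\}_{i\ge 1}$ is an infinite antichain of tree vertices chosen so that $|v_i|=O(\log i)$. First I pick $a\in K$ of order $k$ with $k>2$ or $k=\infty$. Since $G$ is an infinite finitely generated branch group and $[G:K]<\infty$, the subgroup $K$ is also infinite and finitely generated. If $K$ has an element of infinite order, take it as $a$ and set $k=\infty$; otherwise $K$ is torsion, and if every element of $K$ had order at most $2$ then $K$ would be a finitely generated abelian group of exponent $2$, hence finite---contradicting $|K|=\infty$. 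So in the torsion case some $a\in K$ has order $k>2$.

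Next, for each vertex $v=(j_1,\ldots,j_t)$ of $\mathcal T^{(m)}$, set $\varphi_v:=\varphi_{j_1}\circ\cdots\circ\varphi_{j_t}:K\to K$. By the definitions of $\sigma_j$ and $\chi$, the element $\varphi_v(a)\in K$ acts as $a$ on the subtree of $\mathcal T^{(m)}$ rooted at $v$ and trivially on every vertex outside that subtree. Whenever two vertices $v,v'$ are incomparable (neither is a prefix of the other), the elements $\varphi_v(a)$ and $\varphi_{v'}(a)$ commute and generate independent cyclic groups of order $k$, since any nontrivial power of $\varphi_v(a)$ still acts nontrivially on the subtree at $v$ and trivially at $v'$. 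Consequently, any antichain $\{v_i\}$ of vertices yields a sequence $a_i:=\varphi_{v_i}(a)$ of elements of order $k$ whose cyclic subgroups sum directly in $G$.

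It remains to fix an infinite antichain in the vertex set of $\mathcal T^{(m)}$ that can be produced from the index $i$ in polynomial time and satisfies $|v_i|=O(\log i)$. An Elias--gamma-style prefix code does this explicitly: writing the base-$m$ expansion of $i$ as a string $w_i$ of length $\ell_i=\lfloor\log_m i\rfloor+1$, let $v_i$ be the string made of $\ell_i$ zeros followed by the digit $1$ followed by $w_i$, of total length $2\ell_i+1$. Distinct $v_i,v_j$ are incomparable: if $\ell_i\ne\ell_j$, they differ at the position of their unique first $1$; if $\ell_i=\ell_j$, they have equal length, so inequality precludes being a prefix. Each $\varphi_j$ is realized by a fixed substitution that sends a group word of length $L$ to one of length at most $CL+D$ (constants depending only on $G$), so $|a_i|=O(C^{|v_i|})=O(i^{\log_m C})$, polynomial in $i$; producing $a_1,\ldots,a_n$ therefore runs in time polynomial in the unary input $n$. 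The main difficulty is the depth-versus-density tradeoff: greater depth gives many pairwise incomparable vertices (up to $m^t$ at depth $t$) but also inflates word length by a constant factor per level, so the depth of the $i$-th antichain vertex must grow only logarithmically in $i$. The Kraft-type encoding above resolves this precisely, placing $\Theta(m^t)$ codewords at depth $\Theta(t)$ and thereby matching the exponential expansion of the tree against the exponential inflation of word length.
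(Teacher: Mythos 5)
Your proof is correct and takes essentially the same approach as the paper's: both select an element of $K$ of infinite order or of order $k>2$ (ruling out exponent $2$ via the impossibility of $K$ being abelian), place copies of it at the vertices of a prefix-free antichain of depth $O(\log n)$ via the iterated embeddings $\varphi_v$, and obtain polynomial running time from the constant-factor length blowup per level. The paper's antichain is $1^j0w$ with $w\in\{0,1\}^j$ rather than your Elias-gamma code, and it derives directness algebraically from the directness of $\varphi_0(K)+\varphi_1(K)$ rather than from disjoint supports on the tree, but these are only cosmetic differences.
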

\begin{proof}
Observe that $K$ has at least one element, say $d$, of infinite order or of order $k>2$, otherwise $K$ is abelian and therefore $G$ is virtually abelian, which is imposible (see, for example,~\cite[Lemma 2]{Grigorchuk-Wilson:2003}).


Let $p$ be the smallest integer such that $2^{p+1}-1\ge n$. Note $p\le \log_2 n$.
Consider the following $1+2+\ldots+2^p\ge n$ tuples of indices:
\begin{align*}
&0,\\
&100, 101,\\
&11000, 11001, 11010, 11011,\\
&\ldots,\\
&\underbrace{1\ldots1}_j0 i_1\ldots i_j,\qquad  i_1,\ldots,i_j=0,1,\\
&\ldots,\\
&\underbrace{1\ldots1}_{p}0 i_1\ldots i_p,\qquad  i_1,\ldots,i_p=0,1.
\end{align*}
For each tuple $i_1\ldots i_\ell$ above, apply the composition
$\varphi_{i_1\ldots i_\ell}=\varphi_{i_1}\circ\cdots\circ\varphi_{i_\ell}$ to the element $d\in K$. We may assume that each $\varphi_j$ is given in terms of (finitely many) generators of $K$, and therefore straightforward computation of each element $a_{i_1\ldots i_\ell}=\varphi_{i_1\ldots i_\ell}(d)$ takes polynomial time, since $\ell\le 2p+1\le 2\log_2n+1$. Since the sum $\varphi_0(K)+\varphi_1(K)\le K$ is direct, it follows that the $2^{p+1}-1$ elements $a_{i_1\ldots i_\ell}$ generate cyclic subgroups whose sum is direct.
\end{proof}

\section{SSP in $\mathbb Z_k^\infty$}\label{se:zoe_to_z_infty}

In this section we consider the infinitely generated group $\mathbb Z_k^\infty$. For algorithmic purposes, we assume that generating elements are encoded by binary strings (see, for example,~\cite[Section 4]{Miasnikov-Nikolaev-Ushakov:2014b}).
\begin{prop}\label{pr:reduction}
Let integer $k\ge 2$. The following holds.
\begin{itemize}
\item 
If $k=2$, then $\SSP(\mathbb Z_k^\infty) \in \P$.
\item 
If $k>2$, then $\SSP(\mathbb Z_k^\infty)$ is $\NP$-complete.
\end{itemize}
\end{prop}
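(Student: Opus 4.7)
For $k=2$, I would observe that $\mathbb Z_2^\infty$ is naturally an $\mathbb F_2$-vector space, and since the subset sum coefficients $\varepsilon_i$ lie in $\{0,1\}=\mathbb F_2$, the equation $g=\sum\varepsilon_i g_i$ is exactly a system of linear equations over $\mathbb F_2$ in the unknowns $\varepsilon_i$. Gaussian elimination solves such a system in polynomial time, so $\SSP(\mathbb Z_2^\infty)\in\P$.

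For $k\ge 3$, membership in $\NP$ is immediate: guess the binary string $\varepsilon_1\varepsilon_2\ldots$ and verify coordinate by coordinate. For $\NP$-hardness I would reduce from $\ZOE$. The natural attempt is to encode each input vector $\overline{a_i}\in\{0,1\}^n$ as the element of $\mathbb Z_k^\infty$ whose first $n$ coordinates coincide with $\overline{a_i}$, and to set the target to $(1,\ldots,1,0,0,\ldots)$. Since $\varepsilon_i,a_{ij}\in\{0,1\}$, each coordinate sum $\sum_i\varepsilon_i a_{ij}$ is an integer in $\{0,1,\ldots,w_j\}$ where $w_j=|\{i:a_{ij}=1\}|$ is the row weight. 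When $w_j\le k$, the only value in this range congruent to $1$ modulo $k$ is $1$ itself (since $k\equiv 0\pmod k$), so the mod-$k$ equality forces equality in $\mathbb Z$ and the naive reduction succeeds.

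The remaining step is therefore to preprocess a $\ZOE$ instance so that every row weight is at most $k$. For a row of weight $w>k$, I would partition its support into groups $G_1,\ldots,G_t$ of size at most $k-1$, introduce fresh variables $z_1,\ldots,z_t$ together with ``complements'' $\bar z_1,\ldots,\bar z_t$ linked by weight-$2$ rows $\{z_l,\bar z_l\}$, and replace the original row by $t$ group rows of support $G_l\cup\{\bar z_l\}$ and one selector row $\{z_1,\ldots,z_t\}$. Together these encode the equivalent system $\sum_{i\in G_l}\varepsilon_i=z_l$ and $\sum_l z_l=1$. Group, complement, and selector rows all have weight at most $k$ (the last after at most one recursive application of the same procedure), and the overall blow-up is polynomial in the size of the input.

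The main obstacle I anticipate is arranging the splitting gadget to stay inside the $\ZOE$ format---entries in $\{0,1\}$ and target $1$ at every coordinate---while faithfully simulating the integer identity $\sum_{i\in G_l}\varepsilon_i=z_l$; the introduction of the complement variables $\bar z_l$ is exactly the device that avoids the $-1$ coefficients a direct encoding would demand. Once the gadget is verified, composing the preprocessing with the naive encoding yields the desired polynomial-time reduction from $\ZOE$, and $\SSP(\mathbb Z_k^\infty)$ is $\NP$-complete.
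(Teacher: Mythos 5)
Your proposal is correct, and while the $k=2$ case and the $\NP$-membership observation coincide with the paper's, your $\NP$-hardness argument takes a genuinely different route. The paper also reduces from $\ZOE$, but it avoids the wrap-around problem by \emph{spreading} coordinates rather than by bounding weights: each $\ZOE$ vector $\overline{u_i}$ is encoded over $n$ blocks of $n$ coordinates so that distinct $\xi_i$ never contribute to the same coordinate (the $j$-th block of $\xi_i$ is either zero or the $i$-th standard basis vector), and auxiliary ``shift'' elements $\delta_{ij}$ with entries $-1,+1$ are used to carry a single $1$ to a designated position in each block; the hypothesis $k>2$ enters because $2\not\equiv 0$ and $-1\not\equiv 1 \pmod k$ then force the mod-$k$ identities to hold over $\mathbb Z$. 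Your approach instead performs a $\ZOE$-to-$\ZOE$ self-reduction that caps every column weight at $k$ (via the $z_l,\bar z_l$ gadget, which is sound and complete as you describe), after which the naive coordinatewise embedding works; here $k>2$ enters because the groups $G_l$ must have size $k-1\ge 2$ for the recursion to make progress. One small correction: a single recursive pass on the selector row does not suffice in general --- the selector weight is about $w/(k-1)$, so you need $O(\log_{k-1} n)$ rounds --- but the weight strictly decreases at each round and the total blow-up is a geometric series, so the reduction remains polynomial. Your version has the aesthetic advantage of keeping all constructed group elements zero-one and isolating the combinatorial work inside $\ZOE$ itself; the paper's version needs no preprocessing or recursion and yields a single transparent one-shot encoding, at the cost of using entries equal to $-1$ and a block structure of size $n^2$.
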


\begin{proof}
If $k=2$, then an instance $(\xi_1,\ldots,\xi_n,\xi)$ of $\SSP(\mathbb Z_k^\infty)$ is positive
if and only if $\xi \in \gp{\xi_1,\ldots,\xi_n}$. The latter can be easily checked using linear algebra.

Let $k>2$. We claim that $\ZOE$ can be reduced to $\SSP(\mathbb Z_k^\infty)$.
Indeed, consider an instance $(\overline{u_1},\ldots,\overline{u_n})$ of $\ZOE$, where
\[
\overline{u_i} = (u_{i1}, \ldots, u_{in}) \mbox{ for each }i=1,\ldots n,
\]
with $u_{ij}\in\{0,1\}$.
Let $b_0 \in \mathbb Z_k^n$ be a sequence of zeros.
For $i=1,\ldots, n$ define a sequence $b_i \in \mathbb Z_k^n$ as a sequence of
zeros with $1$ in $i$th place.
For each $1\le i\le n$ and $v\in\{0,1\}$ define:
\[
b_{iv} = 
\begin{cases}
b_0 & \mbox{if }v=0;\\
b_i & \mbox{if }v=1.\\
\end{cases}
\]
Let $\xi_i$ be a concatenation $b_{i,u_{i1}}\ldots b_{i,u_{in}}$ and $\xi$ a concatenation
$b_{n1}\ldots b_{n1}$.
Also, define $\delta_i \in \mathbb Z_k^n$ (for $1\le i\le n-1$) to be a sequence of zeros except
for $-1$ in $i$th place and $1$ in $(i+1)$th place. Finally, for each $1\le i\le n$ 
and $1\le j\le n-1$ define a  sequence $\delta_{ij}$ to be concatenation of $n-1$ copies of $b_0$
and a single copy of $\delta_j$ in $i$th place:
\[
\delta_{ij} = b_0 \ldots b_0 \delta_j b_0 \ldots b_0.
\]
It is easy to see that if $(\overline{u_1},\ldots,\overline{u_n})$ is a positive instance  of $\ZOE$ then $(\xi_1,\ldots,\xi_n,$ $\delta_{11},\delta_{12},\ldots,$ $\delta_{n,n-1},\xi)$
is a positive instance of $\SSP(\mathbb Z_k^\infty)$. Conversely, suppose the latter is a positive instance of $\SSP(\mathbb Z_k^\infty)$. Inspecting the first $n$ coordinates we observe that in the solution to this instance of $\SSP$, there must be exactly one $\xi_i$ with a $1$ among the first $n$ coordinates; same for the second $n$ coordinates, and so on. It follows that the corresponding tuple $(\overline{u_1},\ldots,\overline{u_n})$ is a positive instance  of $\ZOE$.



Therefore, $\SSP(\mathbb Z_k^\infty)$ is $\NP$-hard when $k>2$. 
Since $\SSP(G)\in\NP$ for every group $G$ with polynomial time word problem we get the result.
\end{proof}

\begin{exa}
Here we give a particular example of the reduction
described above.
Consider an instance of $\ZOE$ with $n=3$:
\[
\begin{array}{ccc}
(1, & 1, & 0), \\
(1, & 0, & 1), \\
(0, & 1, & 0). \\
\end{array}
\]
Then the corresponding instance of 
$\SSP(\mathbb Z_3^\infty)$ is defined by a system of 
sequences with $\ldots$ standing for an infinite 
sequence of zeros:
\[
\begin{array}{lccc|ccc|cccc}
\xi_1= & 1 & 0 & 0 & 1 & 0 & 0 & 0 & 0 & 0&\ldots \\
\xi_2= & 0 & 1 & 0 & 0 & 0 & 0 & 0 & 1 & 0&\ldots \\
\xi_3= & 0 & 0 & 0 & 0 & 0 & 1 & 0 & 0 & 0&\ldots \\
\delta_{11}= & 2 & 1 & 0 & 0 & 0 & 0 & 0 & 0 & 0&\ldots \\
\delta_{12}= & 0 & 2 & 1 & 0 & 0 & 0 & 0 & 0 & 0&\ldots \\
\delta_{21}= & 0 & 0 & 0 & 2 & 1 & 0 & 0 & 0 & 0&\ldots \\
\delta_{22}= & 0 & 0 & 0 & 0 & 2 & 1 & 0 & 0 & 0&\ldots \\
\delta_{31}= & 0 & 0 & 0 & 0 & 0 & 0 & 2 & 1 & 0&\ldots \\
\delta_{32}= & 0 & 0 & 0 & 0 & 0 & 0 & 0 & 2 & 1&\ldots \\
\hline
\xi  = & 0 & 0 & 1 & 0 & 0 & 1 & 0 & 0 & 1&\ldots \\
\end{array}
\]
\end{exa}

\section{Subset sum problem in weakly regular branch groups}





\begin{thm}
Let $G$ be a finitely generated weakly regular branch group. Then $\SSP(G)$ is $\NP$-hard.
\end{thm}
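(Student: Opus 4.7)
The plan is to give a polynomial-time many-one reduction from $\ZOE$ to $\SSP(G)$ that factors through $\SSP(\mathbb Z_k^\infty)$. First I would invoke Lemma~\ref{le:abelian_subgroup} to obtain, for an appropriate $N$ polynomial in the $\ZOE$ input size, elements $a_1,\ldots,a_N\in K\le G$ of common order $k$ (with $k>2$ or $k=\infty$) that generate an internal direct sum $\bigoplus_{i=1}^N \langle a_i\rangle \cong \bigoplus_{i=1}^N \mathbb Z_k$ inside $G$. The Lemma guarantees that these $a_i$ can be written down as words over the generators of $G$ in polynomial time.

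Second, given an instance $(\ovu_1,\ldots,\ovu_n)$ of $\ZOE$, I would apply the reduction of Proposition~\ref{pr:reduction} to produce the sequences $\xi_1,\ldots,\xi_n,\delta_{11},\ldots,\delta_{n,n-1},\xi \in \mathbb Z_k^\infty$. Every nonzero entry of these sequences lies in $\{-1,1,2\}$, so each sequence $v=(v_1,v_2,\ldots)$ with finite support can be realized as a finite product $\prod_i a_i^{v_i}$ in the chosen abelian subgroup of $G$. Substituting each $\mathbb Z_k^\infty$-vector by this group element produces the output instance of $\SSP(G)$.

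Correctness is immediate once one observes that the assignment $v\mapsto \prod_i a_i^{v_i}$ is an injective homomorphism $\bigoplus_i\mathbb Z_k \hookrightarrow G$, because the $a_i$ pairwise commute and their cyclic subgroups sum directly. Hence $g_1^{\varepsilon_1}\cdots g_r^{\varepsilon_r}=g$ holds in $G$ if and only if the corresponding coordinatewise equation $v_1\varepsilon_1+\cdots+v_r\varepsilon_r=v$ holds in $\mathbb Z_k^\infty$, so the $\SSP(G)$ instance is positive iff the $\SSP(\mathbb Z_k^\infty)$ instance is, iff the original $\ZOE$ instance is. The polynomial time bound follows because the Lemma provides each $a_i$ of word length $\mathrm{poly}(N)$, and each output word is a product of at most $\mathrm{poly}(n)$ many $a_i^{\pm 1}$ or $a_i^{\pm 2}$ factors.

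The main subtlety is that Proposition~\ref{pr:reduction} is stated for $\mathbb Z_k^\infty$ with finite $k>2$, whereas the Lemma may yield elements of infinite order ($k=\infty$). The same gadget nonetheless remains valid over $\mathbb Z^\infty$: the coordinate values $\{-1,0,1,2\}$ are pairwise distinct in $\mathbb Z$ too, so the coordinate-inspection argument in the proof of Proposition~\ref{pr:reduction} carries over verbatim. Alternatively, in the case $k=\infty$ one may quote the $\NP$-completeness of $\SSP(\mathbb Z^\omega)$ recalled in the introduction. Either way, both regimes are handled uniformly and $\NP$-hardness of $\SSP(G)$ follows.
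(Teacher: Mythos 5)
Your proposal is correct and follows essentially the same route as the paper: invoke Lemma~\ref{le:abelian_subgroup} to obtain a polynomial-time constructible copy of $\mathbb Z_k^\infty$ (or $\mathbb Z^\infty$) inside $G$, and push the $\ZOE$-reduction of Proposition~\ref{pr:reduction} (resp.\ the known $\NP$-completeness of $\SSP(\mathbb Z^\omega)$ when $k=\infty$) through that embedding. The paper states this composition of reductions in one line; you have merely spelled out the details, including the injectivity of $v\mapsto\prod_i a_i^{v_i}$ and the polynomial size bounds, all of which are accurate.
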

\begin{proof}
By Lemma~\ref{le:abelian_subgroup}, $G$ contains a
subgroup isomorphic to $\mathbb Z^\infty$ or $\mathbb Z_k^\infty$ ($k\in\mathbb Z, k>2$). Recall that $\SSP(\mathbb Z^\infty)$ is $\NP$-complete by \cite{Miasnikov-Nikolaev-Ushakov:2014a}, and $\SSP(\mathbb Z_k^\infty)$, $k\in\mathbb Z, k>2$, is $\NP$-complete by Proposition~\ref{pr:reduction}. By Lemma~\ref{le:abelian_subgroup} it follows that either of those problems is $\P$-time reducible to $\SSP(G)$, therefore $\SSP(G)$ is $\NP$-hard.
\end{proof}

The above theorem applies, for example, to the first Grigorchuk group and all so-called Grigorchuk--Gupta--Sidki groups (see~\cite{Baumslag:93} for a definition).

Since contracting automaton groups have polynomial time decidable word problem~\cite{Nekrashevych:2005}, we obtain the following corollary.

\begin{cor}
Let $G$ be a finitely generated weakly regular contracting branch group. Then $\SSP(G)$ is $\NP$-complete.
\end{cor}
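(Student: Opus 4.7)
The corollary follows by combining the preceding theorem with the standard observation that a polynomial time word problem forces $\SSP$ to lie in $\NP$. Here is how I would organize the argument.

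First I would invoke the preceding theorem to get $\NP$-hardness of $\SSP(G)$, which requires only that $G$ be a finitely generated weakly regular branch group; the contracting hypothesis is not needed for this half. So only the inclusion $\SSP(G) \in \NP$ remains.

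For the upper bound, I would argue that any contracting automaton group $G$ has polynomial time word problem by the cited result of Nekrashevych. Given an instance $(g_1,\ldots,g_k,g)$ of $\SSP(G)$ of total bit-length $N$, I would use the tuple $(\varepsilon_1,\ldots,\varepsilon_k)\in\{0,1\}^k$ as the nondeterministic certificate; this certificate has length at most $N$. The verifier concatenates the words $g_1^{\varepsilon_1}\cdots g_k^{\varepsilon_k} g^{-1}$, producing a single word of length $O(N)$ in the generators of $G$, and then calls the polynomial time word problem algorithm to test whether this word represents the identity. The verifier accepts iff the word problem algorithm returns yes. This runs in polynomial time, and clearly the acceptance condition is equivalent to equation~\eqref{eq:SSP-def} holding in $G$ for some choice of $\varepsilon_i\in\{0,1\}$. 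Hence $\SSP(G)\in\NP$.

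Combining the two directions gives $\NP$-completeness. I do not expect any real obstacle here; the only non-routine input is Nekrashevych's result on the complexity of the word problem in contracting automaton groups, which is quoted from the literature, and the observation from \cite[Proposition 2.5]{Miasnikov-Nikolaev-Ushakov:2014a} that the complexity of $\SSP(G)$ is independent of the chosen finite generating set, so that we may work with whichever generating set makes the word problem algorithm of \cite{Nekrashevych:2005} applicable.
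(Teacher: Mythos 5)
Your proposal is correct and matches the paper's own (implicit) argument exactly: the paper derives the corollary by combining the preceding theorem with the remark that contracting automaton groups have polynomial time word problem by \cite{Nekrashevych:2005}, together with the earlier observation that $\SSP(G)\in\NP$ whenever $G$ has polynomial time word problem. Your added detail about the certificate $(\varepsilon_1,\ldots,\varepsilon_k)$ and the verifier simply spells out that observation; there is no substantive difference.
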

In particular, we note that the first Grigorchuk group has an $\NP$-complete subset sum problem.

As a final remark, we recall that the Lamplighter group also has an $\NP$-complete subset sum problem by~\cite{Misch-Tr:2018, Ganardi-etal:2018}, and the technique used in the proof of that result also involves reduction of $\ZOE$ (more precisely, the easily equivalent Exact Set Cover problem) exploiting ``wide'' abelian subgroups. Since both weakly regular groups and the Lamplighter group are automaton groups, this suggests the following question.

\medskip\noindent
{\sc Question.} Describe which automaton groups have an $\NP$-hard subset sum problem, and which have a polynomial time subset sum problem.

\bibliographystyle{plain}
\bibliography{do_biblio}

\end{document}